\DeclareRobustCommand{\em}{%
	\@nomath\em \if b\expandafter\@car\f@series\@nil
	\normalfont \else \slshape \fi}
\tikzstyle{tikzfig}=[baseline=-0.25em,scale=0.5]
\tikzstyle{none}=[inner sep=0mm]
\newcommand{\tikzfig}[1]{%
	{\tikzstyle{every picture}=[tikzfig]
		\IfFileExists{#1.tikz}
		{\input{#1.tikz}}
		{%
			\IfFileExists{./figures/#1.tikz}
			{\input{./figures/#1.tikz}}
			{\tikz[baseline=-0.5em]{\node[draw=red,font=\color{red},fill=red!10!white] {\textit{#1}};}}%
	}}%
}
\tikzstyle{every loop}=[]
\tikzstyle{blue}=[-, draw=blue]
\tikzstyle{dashed}=[-,dotted,draw=black]
\tikzstyle{dashedred}=[-,dotted,dashed,draw=red]
\tikzstyle{smallblue}=[fill=blue, draw=blue, shape=circle, minimum size=4pt, inner sep=0pt]
\tikzstyle{bluecircle}=[fill=white, draw=blue, shape=circle]
\tikzstyle{endarrow}=[->]
\tikzset{
	on each segment/.style={
		decorate,
		decoration={
			show path construction,
			moveto code={},
			lineto code={
				\path [#1]
				(\tikzinputsegmentfirst) -- (\tikzinputsegmentlast);
			},
			curveto code={
				\path [#1] (\tikzinputsegmentfirst)
				.. controls
				(\tikzinputsegmentsupporta) and (\tikzinputsegmentsupportb)
				..
				(\tikzinputsegmentlast);
			},
			closepath code={
				\path [#1]
				(\tikzinputsegmentfirst) -- (\tikzinputsegmentlast);
			},
		},
	},
	mid arrow/.style={postaction={decorate,decoration={
				markings,
				mark=at position .7 with {\arrow[#1]{stealth}}
	}}},
}
\tikzset{%
	link/.style    = { white, double = black, line width = 1.8pt,
		double distance = 0.4pt },
	channel/.style = { white, double = black, line width = 0.8pt,
		double distance = 0.8pt },
}
\tikzset{%
	blink/.style    = { white, double = blue, line width = 2pt,
		double distance = 1pt },
	channel/.style = { white, double = blue, line width = 2pt,
		double distance = 1pt },
}
\tikzstyle{tikzfig}=[baseline=-0.25em,scale=0.5]
\tikzstyle{none}=[inner sep=0mm]
\tikzstyle{every loop}=[]
\newtheoremstyle{dtheorem}
{\topsep}
{\topsep}
{\slshape}
{0pt}
{\bfseries}
{.}
{ }
{\thmname{#1}\thmnumber{ #2}\thmnote{ {\normalfont\slshape(#3)}}}
\newtheoremstyle{ddefinition}
{\topsep}
{\topsep}
{\normalfont}
{0pt}
{\bfseries}
{.}
{ }
{\thmname{#1}\thmnumber{ #2}\thmnote{ {\normalfont\slshape(#3)}}}
\theoremstyle{dtheorem}
\newtheorem{theorem}{Theorem}[section]
\newtheorem*{rep@theorem}{\rep@title}
\newcommand{\newreptheorem}[2]{%
	\newenvironment{rep#1}[1]{%
		\def\rep@title{#2 \ref{##1}}%
		\begin{rep@theorem}}%
		{\end{rep@theorem}}}
\newtheorem{corollary}[theorem]{Corollary}
\theoremstyle{ddefinition}
\newtheorem{definition}[theorem]{Definition}
\numberwithin{equation}{section}
\definecolor{Blue}  {rgb} {0.282352,0.239215,0.803921}
\definecolor{Green} {rgb} {0.133333,0.545098,0.133333}
\definecolor{Red}   {rgb} {0.803921,0.000000,0.000000}
\definecolor{Violet}{rgb} {0.580392,0.000000,0.827450}
\newtheorem*{theorem*}{Theorem}
\newtheorem*{corollary*}{Corollary}
\titleformat{\subsection}[runin]
{\normalfont\slshape}
{\thesubsection}
{0.5em}
{}
[.]
\renewcommand\section{\@startsection {section}{1}{\z@}%
	{-3.5ex \@plus -1ex \@minus -.2ex}%
	{2.3ex \@plus.2ex}%
	{\normalfont\scshape\centering}}
\begin{document}
	
	\vspace*{-1.5cm}	\begin{center}	\textbf{\large{Frobenius Algebras, Factorization Homology \\[0.5ex] and the Reshetikhin-Turaev Invariants}}\\	\vspace{1cm}{\large Deniz Yeral }\\ 	\vspace{5mm}{\slshape  Université Bourgogne Europe\\ CNRS\\ IMB UMR 5584\\ F-21000 Dijon\\ France }\end{center}	\vspace{0.3cm}	
	\begin{abstract}\noindent 
		For a ribbon fusion category $\mathcal{A}$ and a special symmetric commutative Frobenius algebra $F$ in $\mathcal{A}$, we use factorization homology and the ansular correlators obtained via the modular microcosm principle to construct a diffeomorphism invariant vector inside the skein module of any closed oriented three-dimensional manifold. If $\mathcal{A}$ is a modular fusion category and $F$ is the monoidal unit, this recovers the Reshetikhin-Turaev invariants.
	\end{abstract}

	\section{Introduction and summary}
	The \emph{microcosm principle} in the sense of Baez-Dolan \cite{BaDo} roughly says that an algebraic structure of a certain type lives in a category equipped with a higher categorical version of the same algebraic structure. For example, an associative algebra can be defined in a monoidal category, which itself is an associative algebra up to coherent isomorphism
	in the bicategory of categories. 
	
	The microcosm principle is extended to cyclic and modular algebras in \cite{Woi}, and the following instance of it is going to be important in this note:
	Cyclic framed $E_2$-algebras $\mathcal{A}$ in a suitable symmetric monoidal bicategory of finite linear categories in the sense of~\cite{EO} over a fixed algebraically closed field $k$
	are classified in \cite{MW1}. In particular, a finite ribbon category, i.e.\ a finite braided monoidal category with rigid duality, simple unit and compatible balancing, is a cyclic framed $E_2$-algebra. The \emph{cyclic} microcosm principle allows us to consider cyclic framed $E_2$-algebras $F$ \emph{inside} a cyclic framed $E_2$-algebra $\mathcal{A}$, and this amounts exactly to a symmetric commutative Frobenius algebra. This is in line with the usual notion of Frobenius algebras in tensor categories, see e.g.\
	\cite{FuchsStigner}.
	
	A cyclic framed $E_2$-algebra $\mathcal{A}$ can be uniquely extended to an \emph{ansular functor}~\cite{MW2}, i.e.\ a consistent system of representations $\widehat{\mathcal{A}}(H)$ of the mapping class groups $\text{Map}(H)$ of compact oriented three-dimensional handlebodies $H$. Given a symmetric commutative Frobenius algebra $F$, the \emph{modular} microcosm principle endows each such handlebody group representation with a 
	vector $\xi_H^F \in \widehat{\mathcal{A}}(H)$ that is invariant with respect to the handlebody group action and compatible with gluing. In the case of a rigid duality, these handlebody group invariants extend the genus zero correlators in \cite{jfcs}. For this reason, they are referred to as \emph{ansular correlators} in \cite{Woi}. As opposed to the usual correlators in conformal field theory, see e.g.~\cite{algcften} for an introduction, the ansular correlators are not necessarily invariant under the full mapping class group of $\partial H$ (in case the action of $\text{Map}(H)$ on $\widehat{\mathcal{A}}(H)$ extends to $\text{Map}(\partial H)$), but just under $\text{Map}(H)\subset \text{Map}(\partial H)$. This reduced symmetry is crucial for the connection to manifold invariants. 
	
	In this note, we will treat the case in which $\mathcal{A}$ is a ribbon fusion category, in particular, $\mathcal{A}$ is semisimple. For the statement of the main result, suppose that $M\cong H'\cup_\Sigma H$ is a Heegaard splitting of a closed oriented three-dimensional manifold $M$ and $F\in \mathcal{A}$ is a symmetric commutative Frobenius algebra. Then we can consider the vector $\xi_H^F \in \widehat{\mathcal{A}}(H)$ and complement it  to a vector 
	$v_M^F \in \text{sk}_\mathcal{A}(M)$ inside the $\mathcal{A}$-skein module for $M$. If we additionally assume that $F$ is a \emph{special} Frobenius algebra in the sense of \cite{FFRS3}, we prove that this vector does not depend on the choice of the Heegaard splitting and is invariant with respect to diffeomorphisms of $M$:
	\begin{reptheorem}{thmain2}
		Let $M$ be a closed oriented three-dimensional manifold. Then any special symmetric commutative Frobenius algebra $F$ inside a ribbon fusion  category $\mathcal{A}$ gives rise to a vector $v_M^F \in \text{sk}_{\mathcal{A}}(M)$ inside the $\mathcal{A}$-skein module for $M$ that is invariant under oriented diffeomorphisms of $M$.
	\end{reptheorem}

	The pair of $\text{sk}_\mathcal{A}(M)$ with its pointing $v_M^F \in \text{sk}_\mathcal{A}(M)$ can be seen as an invariant of the manifold $M$. 
	The proof of Theorem~\ref{thmain2}
	relies on the factorization homology description of ansular functors and skein modules~\cite{Cooke,BW,BH,MW3}
	and demonstrates how manifold invariants can be obtained directly from the classification of operadic algebras, factorization homology and the microcosm principle. While some skein-theoretic calculations appear in the proof of the main result, they are not needed for the mere construction of the vector $v_M^F$.

	\subsection*{Relation to Reshetikhin-Turaev and Crane-Yetter topological field theories}
	Recall that a ribbon fusion category $\mathcal{A}$ gives rise to the four-dimensional Crane-Yetter topological field theory $Z_\mathcal{A}^\text{CY}$ \cite{CY,CKY,CGHP} that assigns to a closed oriented three-manifold $M$ the skein module $\text{sk}_\mathcal{A}(M)$. The pointing $v_M^F \in \text{sk}_\mathcal{A}(M)$
	can be understood as a boundary condition in the spirit of~\cite{Walker,FT,Ben} 
	(here just at the three-dimensional level)
	that 
	leads to a numerical manifold invariant if $\mathcal{A}$ is a \emph{modular} fusion category, i.e.\ has additionally
	a non-degenerate braiding. In that case, the value $\mathcal{Z}_\mathcal{A}^\text{CY}(W):\text{sk}_\mathcal{A}(M)\to k$ of the Crane-Yetter topological field theory on an oriented four-dimensional manifold $W$ with boundary $M$, evaluated at $v_M^F$, provides a numerical invariant of $M$ (the choice of $W$ leads to an ambiguity related to the framing anomaly). In the case where the Frobenius algebra $F$ is the monoidal unit of $\mathcal{A}$, the invariant $\mathcal{Z}_\mathcal{A}^\text{CY}(v_M^F)$ agrees with the Reshetikhin-Turaev invariant~\cite{RT1,RT2,Tur} of $M$ (Corollary~\ref{son}). The question of whether the invariant for $F\neq I$ is more powerful than the Reshetikhin-Turaev invariant as well as the generalizations to the non-semisimple, possibly even non-rigid case lie beyond the scope of this note.

	\subsection*{Inspiration from conformal field theory}
	It is, at least in the semisimple situation, standard to use topological tools to classify and construct correlation functions and to obtain partition functions as invariants of three-manifolds~\cite{FRS1,FRS2,FRS3,FRS4,FFRS,FFRS2}. 
	The construction in this note profits from a similar connection, but in the opposite direction: We take the ansular correlators, obtained from the modular microcosm principle, as a starting point to make assignments for three-dimensional manifolds.
	In doing so, it is crucial that the ansular correlators have a reduced symmetry.
	They are generally only invariant under the handlebody group, but not the entire mapping class group.

	\subsection*{Local modules and topological defects}
	Frobenius algebras with similar properties that we consider also arise in contexts different than the construction of correlators. 
	Namely, if a commutative Frobenius algebra in a ribbon or modular (fusion)
	category satisfies some symmetry and specialness conditions, then in many cases the category that consists of \emph{local modules} \cite{PAREIGIS1995413,SCHAUENBURG2001325} of the Frobenius algebra enjoys again nice properties, i.e.\  it is again ribbon or modular~\cite{kirillovostrik,FFRS3,LW,shimizuyadav}. This means that it could be fed again into the usual topological constructions. It is conceivable that the methods developed in this note admit a description in terms of categories of local modules. Although we do not include a precise statement here, we expect to get this relation through a comparison of our construction with the framework of \emph{defect topological field theories and their orbifolds}~\cite{crs1,CRS2,cmrss1}.
	In \cite{MR,cmrss2} the connection to 
	the once-extended topological field theory given by the category of local modules is established.
	Needless to say, any connection of the construction given in this article to defects and orbifolds would also be of independent interest.
	
	\subsection*{Acknowledgments} I would like to thank my advisor Lukas Woike for many helpful discussions and comments related to this project. Moreover, I would like to thank Christoph Schweigert for bringing the possible connection to local module categories to our attention, Vincentas Mulevičius for valuable explanations on defect topological field theories and their orbifolds, Edwin Kitaeff and Bertrand Patureau-Mirand for helpful discussions, and Lukas M\"uller for helpful comments on the manuscript. This project is supported by the ANR project CPJ n°ANR-22-CPJ1-0001-01 at the Institut de Mathématiques de Bourgogne. The IMB receives support from the EIPHI Graduate School (contract ANR-17-EURE-0002).

	\section{Preliminaries}
	In this preliminary section, we give a recollection of the main definitions and results that are involved in our construction. We start by introducing the formalism of cyclic and modular operads and algebras, together with a classification result of cyclic framed $E_2$-algebras and ansular functors. Then we recall the definition of factorization homology of surfaces and explain how ansular functors can be described in terms of factorization homology.
	\subsection{Cyclic and modular operads}
	Cyclic and modular operads were introduced by Getzler and Kapranov in \cite{GK1} and \cite{GK2}. Intuitively, a cyclic structure allows to consistently exchange the inputs and the output of operations. A modular structure moreover allows to self-compose operations. A definition of cyclic and modular operads is given in \cite{Cos} in terms of graph categories. This definition is adapted in \cite{MW1} to a bicategorical setting. In the following, we define cyclic and modular operads as symmetric monoidal functors from certain graph categories to the symmetric monoidal bicategory $\mathsf{Cat}$, the bicategory of categories, functors and natural transformations with the monoidal structure given by the Cartesian product.
	
	A finite graph $\Gamma$ is a finite set $H$ of half edges and a finite set $V$ of vertices together with a source map $s:H \rightarrow V$ which assigns to a half edge the vertex that it is attached to and an involution $\iota : H\rightarrow H$ which encodes the way half edges are glued. The orbits of $\iota$ are called \emph{edges}. The orbits with two elements are called \emph{internal edges}. Half edges which are fixed by $\iota$ are called \emph{external legs}. The set of external legs is denoted by $\text{Legs}(\Gamma)$. A finite graph with one vertex and no internal edges is called a \emph{corolla}. A map between two graphs $(H,V,\iota, s)$ and $(H',V',\iota', s')$ consists of two maps $\varphi:H \rightarrow H'$ and $\psi:V \rightarrow V'$ so that $\varphi \iota = \iota' \varphi$ and $\psi s = s' \varphi$. 
	
	Let us define $\mathsf{Graphs}$, the category of graphs: The objects are finite disjoint unions of corollas.  A morphism $T \rightarrow T'$ is an equivalence class of graphs $\Gamma$ together with maps $\nu(\Gamma) \cong T$ and $\pi_0(\Gamma) \cong T'$, where $\nu(\Gamma)$ is the graph obtained by cutting in half all the internal edges and $\pi_0(\Gamma)$ is the graph obtained by contracting all the internal edges. The composition of two morphisms $\Gamma_2 \circ \Gamma_1$ is the graph given by replacing the vertices of $\Gamma_2$ with $\Gamma_1$. The disjoint union endows $\mathsf{Graphs}$ with a symmetric monoidal structure.

	A (category-valued) \emph{modular operad} $\mathcal{O}$ is a symmetric monoidal functor $\mathcal{O}: \mathsf{Graphs} \rightarrow \mathsf{Cat}$. Note that here $\mathsf{Graphs}$ is seen as a symmetric monoidal \emph{bicategory} and $\mathcal{O}$ is a symmetric monoidal functor between \emph{bicategories}, meaning that it should be understood in a weak sense, i.e.\ up to coherent isomorphism, see \cite[Section 2]{SP} for the precise coherence data. 
	Note moreover that this definition so far does not include operadic identities; for how to include them,  we refer to~\cite[Definition 2.3]{MW1}. The operads that we consider in this note will always have operadic identities.

	For a corolla $T$, we call the objects in $\mathcal{O}(T)$ the \emph{operations of total arity $|\text{Legs}(T)|$}, where $|\text{Legs}(T)|$ denotes the cardinality of the set $\text{Legs}(T)$ of $T$. The morphisms of $\mathsf{Graphs}$ that are edge contractions give operadic compositions. In order to define cyclic operads, we can similarly construct the category $\mathsf{Forests}$ which has the same objects as $\mathsf{Graphs}$, but only with morphisms whose connected components are contractible. Then by replacing $\mathsf{Graphs}$ above with $\mathsf{Forests}$ we get the notion of a \emph{cyclic operad}.
	
	\subsection{Cyclic and modular algebras}
	
	An algebra over an operad is defined by introducing the so-called \emph{endomorphism operad}. The endomorphism operad has an underlying object $X$ in an ambient symmetric monoidal bicategory. The operations in total arity $(n+1)$ can be seen as morphisms $X^{\otimes n} \to X$ and the operadic composition is given by composition of morphisms. Cyclic and modular endomorphism operads are defined similarly, but with a self-duality structure on the underlying object that captures the cyclic structure and self-compositions. This is introduced in \cite{GK1} and \cite{GK2}. The definitions are extended to the bicategorical situation in \cite{MW1}. 
	
	The ambient symmetric monoidal bicategory that we consider is $\mathsf{Rex}^{\text{f}}$: The objects are \emph{finite} categories in the sense of \cite{EO}, i.e.\ $k$-linear abelian categories whose objects have finite length and which have finite dimensional morphism spaces, enough projective objects and finitely many isomorphism classes of simple objects. A category is finite if and only if it is equivalent to the category of finite dimensional modules of a finite dimensional algebra. The 1-morphisms of $\mathsf{Rex}^{\text{f}}$ are right exact functors. The 2-morphisms are linear natural transformations. The monoidal product is the Deligne tensor product $\boxtimes$, see e.g.\ \cite{ILM} for more background. The monoidal unit is the category of finite-dimensional vector spaces that we denote by $\mathsf{Vect}$. 
	
	Consider an object $X \in \mathsf{Rex}^{\text{f}}$ together with a \emph{non-degenerate symmetric} pairing $\kappa: X \boxtimes X \rightarrow \mathsf{Vect}$. Here, non-degenerate means that there exists a copairing $\Delta: \mathsf{Vect} \rightarrow X \boxtimes X$ so that usual snake relations hold up to isomorphism. The pairing is symmetric in the sense that it is equipped with a $\mathbb{Z}_2$-homotopy fixed point structure with respect to the action of the symmetric braiding of $\mathsf{Rex}^{\text{f}}$. 
	
	We define the modular endomorphism operad $\mathsf{End}^{X}_{\kappa}$ as follows: Let $T$ be a corolla. We set $\mathsf{End}^{X}_{\kappa}(T):=\text{Hom}_{\mathsf{Rex}^{\text{f}}}(X^{\boxtimes \text{Legs}(T)},\mathsf{Vect})$. The value of $\mathsf{End}^{X}_{\kappa}$ on a graph that is an edge contraction is defined by inserting $\Delta$ on the internal edges of the graph, see \cite[Section 2.3]{MW1} for the precise definition. A \emph{modular algebra over a modular operad} $\mathcal{O}$ is an object $X \in \mathsf{Rex}^{\text{f}}$ with a non-degenerate symmetric pairing $\kappa$ together with a symmetric monoidal transformation $\mathcal{A}:\mathcal{O} \rightarrow \mathsf{End}_{\kappa}^{X}$. In particular, for any operation $o \in \mathcal{O}(T)$ we get a right exact functor $\mathcal{A}_{o}:X^{\boxtimes \text{Legs}(T)} \rightarrow \mathsf{Vect}$ together with an $\text{End}_{\mathcal{O}(T)}(o)$-action. Note that if $o$ is an operation in total arity 0, then $\mathcal{A}_{o}$ can be identified with a vector space. The notion of a \emph{cyclic} algebra is defined in the same way. Note that in general we will denote the algebra and its underlying category with the same notation.
	
	The definition of an algebra over an operad with an operadic identity has extra compatibility condition. In particular, one has to specify an isomorphism $\mathcal{A}_{1_{\mathcal{O}}} \cong \kappa$ where $1_{\mathcal{O}}$ denotes the operadic identity of $\mathcal{O}$. 
	
	Modular algebras over a modular operad $\mathcal{O}$ form a 2-groupoid \cite[Proposition 2.18]{MW1} and we call two modular algebras equivalent if they are equivalent in this 2-groupoid. The notion of equivalence between cyclic algebras is defined likewise.

	\subsection{Classification of cyclic framed $E_2$-algebras and ansular functors}
	The following two operads will be relevant in this note:
	\begin{itemize}
		\item The modular handlebody operad $\mathsf{Hbdy}$ \cite{Gia}: Let $T$ be a corolla. The objects of the category $\mathsf{Hbdy}(T)$ are connected oriented handlebodies $H$ with $\text{Legs}(T)$-parametrized disks on the boundary, i.e.\ an orientation preserving embedding $\mathbb{D}^{\sqcup \text{Legs}(T)} \hookrightarrow \partial H$ where $\mathbb{D}$ denotes the two-dimensional disk with a fixed orientation and $\sqcup \text{Legs}(T)$ denotes the unordered disjoint union over the set $\text{Legs}(T)$. The morphisms in $\mathsf{Hbdy}(T)$
		are isotopy classes of orientation preserving diffeomorphisms of handlebodies that also preserves the boundary parametrizations. We can monoidally extend this definition to disjoint unions of corollas. The operadic composition is given by gluing along the boundary disks. Note that the automorphism group $\text{Aut}_{\mathsf{Hbdy}(T)}(H)$ is the handlebody group of $H$, see \cite{henselprimer} for an overview of handlebody groups.
		\item The framed $E_2$-operad $\mathsf{fE_2}$ \cite{BV1,May,BV2}: We define the category of operations in total arity $(n+1)$ to be the fundamental groupoid of the space of embeddings $\mathbb{D}^{\sqcup n} \hookrightarrow \mathbb{D}$ that are composed of translations, rescalings and rotations. The operadic composition is induced by composition of embeddings. Note that the framed $E_2$-operad is usually defined as a topological operad, that is, a symmetric monoidal functor $\mathsf{Graphs} \rightarrow \mathsf{Top}$ in the usual 1-categorical sense, the latter is the category of topological spaces. The space of operations in total arity $(n+1)$ is defined as space of embeddings described above. The reason why we can equivalently work with the category-valued model is that the topological framed $E_2$-operad is aspherical \cite{Wahl,SalWahl}, i.e.\ homotopy groups of the spaces of embeddings $\mathbb{D}^{\sqcup n} \hookrightarrow \mathbb{D}$ that are higher than degree 1 vanish.
	\end{itemize}
	
	By restricting the handlebody operad to genus zero, we obtain the \emph{cyclic} framed $E_2$-operad \cite{Bud,Wahl,SalWahl}.
	This implies that modular $\mathsf{Hbdy}$-algebras, the so-called \emph{ansular functors},
	produce by genus zero restriction a cyclic framed $E_2$-algebra.
	This restriction is an equivalence as shown in \cite[Theorem 5.3]{MW2} using \cite{Cos,Gia,MW1}.
	We denote the unique ansular functor extending a cyclic framed $E_2$-algebra $\mathcal{A}$ by $\widehat{\mathcal{A}}$.
	
	Cyclic framed $E_2$-algebras in $\mathsf{Rex}^{\text{f}}$ are classified by \emph{ribbon Grothendieck-Verdier categories} $\mathcal{A}$ in $\mathsf{Rex}^{\text{f}}$ \cite[Theorem 5.13]{MW1}. The notion of a Grothendieck-Verdier category was introduced in \cite{BoDr} and studied before under the name of $*$-autonomous categories \cite{Barr}. Very roughly, a Grothendieck-Verdier category is a monoidal category with a weak duality notion --- in particular, a rigid duality is a Grothendieck-Verdier duality. In our context, this weak duality is induced by the non-degenerate pairing $\kappa$ of $\mathcal{A}$ that is included in the definition of a cyclic algebra --- we refer to \cite[Section 2.5]{MW1}, together with \cite{FSS}, to see how this duality arises. Note that \cite{MW1} investigates the non-degenerate pairings in the bicategory $\mathsf{Lex}^{\text{f}}$, which consists of finite linear categories, \emph{left} exact functors and natural transformations. This leads to dual statements in our $\mathsf{Rex}^{\text{f}}$-valued case, see \cite{BW} and \cite{MW3} where our convention is also used.

	A finite ribbon category $\mathcal{A}$ is a particular example of a ribbon Grothendieck-Verdier category in $\mathsf{Rex}^{\text{f}}$, and it will be the only class of examples that we will consider in this note. In particular, $\mathcal{A}$ has a monoidal product $\otimes: \mathcal{A} \boxtimes \mathcal{A} \to \mathcal{A}$ with a simple unit $I$ together with
	\begin{itemize}
		\item a rigid duality, i.e.\ every object has left and right duals in the sense of \cite[Section 2.10]{EGNO},
		\item natural isomorphisms $c_{X,Y}:X\otimes Y \cong Y \otimes X$ and $\theta_{X}:X \cong X$, called the braiding and the balancing, respectively, such that $c$ satisfies the usual hexagon relations, $\theta_{X \otimes Y} =  c_{Y,X} c_{X,Y} (\theta_{X} \otimes \theta_{Y})$ and $ \theta_{I} = \text{id}_{I}$,
		\item $\theta_X^\vee = \theta_{X^\vee}$ where $X^\vee$ denotes the left dual of $X$.
	\end{itemize}
	
	\subsection{Factorization homology and ansular functors}
	\emph{Factorization homology} is a homology theory for topological manifolds with coefficients in $E_n$-categories \cite{Lur,AF} inspired by \cite{BeDr}. Given an $E_n$-algebra $\mathcal{A}$ in a suitable higher category, factorization homology assigns to each $n$-manifold $\Sigma$ an object $\int_{\Sigma}\mathcal{A}$. This assignment defines a homology theory in the sense that it satisfies an analogue of Eilenberg-Steenrod axioms, in particular, it has nice gluing properties.
	
	In this note, the relevant case is factorization homology for oriented two-dimensional manifolds~\cite{BZBJ}. Factorization homology of surfaces takes a framed $E_2$-algebra $\mathcal{A}$ as input and assigns to a surface $\Sigma$ (for us, a surface will be always compact and oriented) an object $\int_{\Sigma}\mathcal{A} \in \mathsf{Rex}^{\text{f}}$ in a functorial way with respect to oriented embeddings. It is defined as
	\begin{equation*}
		\int_{\Sigma}\mathcal{A} := \underset{\substack{\varphi:\mathbb{D}^{\sqcup n}\hookrightarrow \Sigma \\ n \geq 0}}{\operatorname{hocolim}}\mathcal{A}^{\boxtimes n}
	\end{equation*}
	where the homotopy colimit runs over all oriented embeddings of disjoint unions of disks into $\Sigma$. The functor $\mathcal{O}^{\mathcal{A}}_{\Sigma}:\mathsf{Vect} \rightarrow \int_{\Sigma}\mathcal{A}$ induced by the embedding $\emptyset \hookrightarrow \Sigma$ can be identified with an object in $\int_{\Sigma}\mathcal{A}$ that is again denoted by $\mathcal{O}^{\mathcal{A}}_{\Sigma}$ and called the \emph{quantum structure sheaf}.
	
	If $\mathcal{A}$ is moreover a \emph{cyclic} framed $E_2$-algebra, then we obtain even more structure thanks to the ansular functor $\widehat{\mathcal{A}}$, see~\cite[Section~4]{BW} for the details: Let $H$ be a handlebody without boundary disks and $\Sigma = \partial H$. One can construct a	functor
	\begin{equation*}
		\Phi_{\mathcal{A}}(H):\int_{\Sigma}\mathcal{A} \rightarrow \mathsf{Vect}
	\end{equation*}
	by using 
	$\widehat{\mathcal{A}}$ and the universal property of factorization homology. Then, the following diagram commutes up to a canonical isomorphism:
	\begin{align}\begin{array}{c}
			\begin{tikzpicture}[scale=0.5]
				\begin{pgfonlayer}{nodelayer}
					\node [style=none] (0) at (-9.5, 0) {};
					\node [style=none] (1) at (0, 0) {};
					\node [style=none] (2) at (10, 0) {};
					\node [style=none] (3) at (-9.5, 0) {$\mathsf{Vect}$};
					\node [style=none] (4) at (0, 0) {};
					\node [style=none] (5) at (0, 0) {$\int_{\Sigma}\mathcal{A}$};
					\node [style=none] (6) at (9, 0) {$\mathsf{Vect}$};
					\node [style=none] (7) at (-8.25, 0) {};
					\node [style=none] (8) at (-1.75, 0) {};
					\node [style=none] (9) at (1.5, 0) {};
					\node [style=none] (10) at (8, 0) {};
					\node [style=none] (11) at (-8.75, 0.75) {};
					\node [style=none] (12) at (8, 0.75) {};
					\node [style=none] (14) at (0, 2.5) {$\hat{\mathcal{A}}(H)$};
					\node [style=none] (16) at (0, 1) {$\cong$};
					\node [style=none] (17) at (-5, -0.5) {$\mathcal{O}^{\mathcal{A}}_{\Sigma}$};
					\node [style=none] (18) at (5, -0.5) {$\Phi_{\mathcal{A}}(H)$};
				\end{pgfonlayer}
				\begin{pgfonlayer}{edgelayer}
					\draw [style=endarrow] (7.center) to (8.center);
					\draw [style=endarrow] (9.center) to (10.center);
					\draw [style=endarrow, bend left, looseness=0.50] (11.center) to (12.center);
				\end{pgfonlayer}
			\end{tikzpicture}			
		\end{array}	
		\label{factandans}	\end{align} 
	This gives the factorization homology description of ansular functors.
	
	Let $\mathcal{A}$ be a ribbon \emph{fusion} category, i.e.\ a \emph{semisimple} finite ribbon category, and $M\cong H' \cup_\Sigma H$ a Heegaard splitting for a closed oriented three-dimensional manifold $M$. By \cite[Corollary 3.10]{MW3}, there is a canonical isomorphism 
	\begin{align}
		\int^{X \in \int_\Sigma \mathcal{A}} \Phi_\mathcal{A}(H')X^\vee \otimes \Phi_\mathcal{A}(H)X \xrightarrow{\ \cong\ } \text{sk}_\mathcal{A}(M) \ , \label{eqnphiskeiniso}
	\end{align}
	where $\text{sk}_\mathcal{A}(M)$ is the skein module of the manifold $M$ for the ribbon category $\mathcal{A}$. Recall that the skein module $\text{sk}_\mathcal{A}(M)$ is, roughly, the vector space spanned by all ribbons with coupons in $M$ whose strands are labeled by objects in $\mathcal{A}$ while the coupons are labeled by the morphisms in $\mathcal{A}$, considered up to skein relations, see e.g.\ \cite{GJS} and the references therein.
	
	\section{construction of the vectors inside the skein modules of three-dimensional manifolds}
	
	The \emph{microcosm principle} \cite{BaDo} allows one to consider algebras \emph{inside} a higher categorical version of the same algebraic structure, here an algebra should be understood in the sense of an algebra over an operad. For example, an algebra in the bicategory of categories over the associative operad is a category $\mathcal{A}$, \emph{the macrocosm}, together with a product $\otimes: \mathcal{A} \times \mathcal{A} \to \mathcal{A}$ that is associative up to coherent homotopy, i.e.\ $\mathcal{A}$ is a monoidal category. One can define an associative algebra inside the associative algebra $\mathcal{A}$ to be an object $A \in \mathcal{A}$, \emph{the microcosm}, together with a product $A \otimes A \to A$, that is associative with respect to the associativity isomorpshisms of $\mathcal{A}$. This is the usual notion of an associative algebra $A$ inside a monoidal category $\mathcal{A}$. 
	
	The microcosm principle is extended to cyclic and modular algebras in \cite{Woi}, here we will remind the relevant definitions and results for cyclic framed $E_2$-algebras and ansular functors, we refer to \cite[Section 5 \& Section 10]{Woi} for the precise statements: Recall that cyclic and modular algebras are defined through a self-duality on the underlying object, i.e.\ via a non-degenerate pairing $\kappa$ on $\mathcal{A}$. Analogously, in order to capture the cyclic structure at the microcosm level, one needs to ask for \emph{self duality} of the microcosm $F$, which consists of a map $\beta: F \boxtimes F \to \Delta$ that is subject to non-degeneracy and symmetry conditions. A cyclic framed $E_2$-algebra inside a cyclic framed $E_2$-algebra $\mathcal{A}$ is a self-dual object $F \in \mathcal{A}$ together with a collection of vectors $\xi_{\Sigma}^F \in \mathcal{A}(\Sigma)(F,\dots,F)$ for every genus zero surface $\Sigma$. These vectors are invariant under the mapping class group action and compatible with gluing, which is defined via $\beta$. The \emph{modular} microcosm principle moreover extends a cyclic framed $E_2$-algebra $F$ in $\mathcal{A}$ to handlebodies, i.e.\ to a collection of handlebody group invariant vectors $\xi_{H}^F \in \widehat{\mathcal{A}}(H)(F,\dots ,F)$ that respect gluing, we refer to them as \emph{ansular correlators}. This follows from a microcosmic version of the equivalence between cyclic framed $E_2$-algebras and ansular functors. Cyclic framed $E_2$-algebras $F$ inside $\mathcal{A}$ are symmetric commutative Frobenius algebras --- for example, the vector $\xi_{\mathbb{S}^1 \times [0,1]}^F \in \mathcal{A}(\mathbb{S}^1 \times [0,1])(F,F)$ gives the non-degenerate invariant pairing of the Frobenius algebra $F$. The fact that this vector is invariant with respect to the mapping class group action implies that the pairing is symmetric. We refer to e.g.\ \cite{FuchsStigner} for the definition of a symmetric commutative Frobenius algebra in the rigid case, an example of a symmetric commutative Frobenius algebra that we will particularly consider is the monoidal unit $I$. 
	
	Let $\mathcal{A}$ be a ribbon fusion category. In particular, $\mathcal{A}$ is a cyclic framed $E_2$-algebra and uniquely extends to an ansular functor $\widehat{\mathcal{A}}$ in $\mathsf{Rex}^{\text{f}}$.
	Let $M$ be a closed oriented three-manifold with a Heegaard splitting $M \cong H' \cup_{\Sigma} H$. The factorization homology description of ansular functors in~\eqref{factandans} gives the map	
	\small
	\begin{align}
		\iota_{H',H}: \widehat{\mathcal{A}}(H') \otimes \widehat{\mathcal{A}}(H) \stackrel{\eqref{factandans}}{\cong} \Phi_\mathcal{A}(H')\mathcal{O}_{\bar{\Sigma}}^\mathcal{A} \otimes \Phi_\mathcal{A}(H)\mathcal{O}^{\mathcal{A}}_{\Sigma} \xrightarrow{(*)} \int^{X \in \int_\Sigma \mathcal{A}} \Phi_\mathcal{A}(H')X^\vee \otimes \Phi_\mathcal{A}(H)X \stackrel{\eqref{eqnphiskeiniso}}{\cong} \text{sk}_\mathcal{A}(M)
	\end{align}
	\normalsize
	where $(*)$ is the structure map of the coend. Using the ansular correlators, we can make the following definition:
	\begin{definition}\label{defvmf}
		Let $F\in\mathcal{A}$ be a  symmetric commutative Frobenius algebra in a ribbon fusion category.
		For a closed oriented three-dimensional manifold $M$, choose  a Heegaard splitting $M \cong H' \cup_\Sigma H$. 
		Through
		\begin{equation*}
			k \xrightarrow{\ \xi_{H'}^I \otimes \xi_H^F\ } \widehat{\mathcal{A}}(H') \otimes \widehat{\mathcal{A}}(H) \xrightarrow{ \iota_{H',H} } \text{sk}_\mathcal{A}(M) \ ,
		\end{equation*}
		we define a vector
		\begin{align}v_M^F \in \text{sk}_\mathcal{A}(M)
		\end{align}
		in the skein module of $M$.
	\end{definition}
	Let us assume furthermore that $F$ is a \emph{special} Frobenius algebra \cite[Definition 2.22]{FFRS3}, i.e.\ for non-zero scalars $\lambda, \lambda'$; one has $\varepsilon\eta = \lambda \text{id}_{I}$ and $\mu\delta = \lambda' \text{id}_{F}$, with $\eta: I \to F$ the unit, $\varepsilon:F \to I$ the counit, $\mu:F\otimes F \to F$ the product and $\delta:F \to F \otimes F$ the coproduct of $F$. We can renormalize $\varepsilon$ and $\delta$ to have $\mu \delta = \text{id}_{F}$. With this renormalization, one can show that the vector $v_M^F$ does not depend on the Heegaard splitting of $M$ and we have the following theorem:
	\begin{theorem}\label{thmain2}
		Let $M$ be a closed oriented three-dimensional manifold.
		Then any special symmetric commutative Frobenius algebra $F$ inside a ribbon fusion category $\mathcal{A}$ gives rise to a vector $v_M^F \in \text{sk}_\mathcal{A}(M)$ inside the $\mathcal{A}$-skein module for $M$ that is additionally invariant under oriented diffeomorphisms of $M$.
	\end{theorem}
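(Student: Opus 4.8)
The plan is to prove the two assertions of the statement in turn. That $v_M^F$ lies in $\text{sk}_\mathcal{A}(M)$ is immediate from Definition~\ref{defvmf}, and since the correlators $\xi_H^F$ are handlebody-group invariant, $v_M^F$ is already well-defined for each \emph{fixed} splitting; what requires work is first that $v_M^F$ is independent of the chosen Heegaard splitting, and then that it is fixed by orientation-preserving self-diffeomorphisms of $M$. Because every ingredient of the construction---the assignment $H\mapsto\xi_H^F$, the factorization homology $\int_\Sigma\mathcal{A}$, the functors $\Phi_\mathcal{A}(H)$, and the identifications \eqref{factandans} and \eqref{eqnphiskeiniso}---is natural with respect to orientation-preserving diffeomorphisms, I expect the diffeomorphism invariance to follow formally once splitting-independence is established. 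The substantive content is therefore the independence of the Heegaard splitting.

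For this I would appeal to the Reidemeister--Singer theorem: any two Heegaard splittings of $M$ admit a common stabilization, so it suffices to compare $v_M^F$ for a splitting $M\cong H'\cup_\Sigma H$ with the vector obtained from its stabilization $M\cong H'_+\cup_{\Sigma_+}H_+$, where $H_+=H\,\natural\,B$ and $H'_+=H'\,\natural\,B'$ are the boundary connected sums with the two solid tori $B,B'$ of the genus-one splitting of $\mathbb{S}^3$. A stabilization attaches a trivial one-handle to each side, and at the level of ansular correlators this handle attachment is a self-gluing of two legs, governed by the self-duality datum $\beta$ via the gluing compatibility of the correlators. The key computation is that such a self-gluing inserts the handle operator $\mu\delta\colon F\to F$ into $\xi_H^F$; by the specialness hypothesis, normalized so that $\mu\delta=\text{id}_F$, this operator is the identity. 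On the $H'$-side the relevant algebra is the monoidal unit $I$, which is special with $\mu\delta=\text{id}_I$ and $\varepsilon\eta=\text{id}_I$, so the handle there is likewise invisible. Thus, at the level of correlators, the stabilized data carry no new information beyond a new leg that is contracted trivially.

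The step I expect to be the main obstacle is transporting this correlator-level statement across the isomorphisms \eqref{factandans} and \eqref{eqnphiskeiniso} to the skein module. The stabilization enlarges the genus of $\Sigma$, so the coend in \eqref{eqnphiskeiniso} is now taken over $\int_{\Sigma_+}\mathcal{A}$ rather than $\int_\Sigma\mathcal{A}$, and one must check that the coend structure map $(*)$ entering $\iota_{H'_+,H_+}$, together with the skein identification, is compatible with the self-gluing on the correlator side. Concretely, this is where the skein-theoretic calculation alluded to after Definition~\ref{defvmf} enters: the stabilizing handle is resolved inside $\text{sk}_\mathcal{A}(M)$ by a skein relation, and the insertion of $\mu\delta=\text{id}_F$ guarantees that this resolution returns the original spanning ribbon graphs without change of normalization. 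Semisimplicity of $\mathcal{A}$ keeps this manageable, since the relevant coends reduce to finite direct sums over the simple objects labelling the new handle, so that the handle operator and the summation can be matched term by term.

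Finally, for diffeomorphism invariance, let $\phi\colon M\to M$ be orientation preserving. Then $\phi$ carries $H'\cup_\Sigma H$ to the Heegaard splitting $\phi(H')\cup_{\phi(\Sigma)}\phi(H)$, and naturality of the correlators and of \eqref{factandans}, \eqref{eqnphiskeiniso} shows that the induced map $\text{sk}_\mathcal{A}(\phi)$ sends the vector built from the first splitting to the vector built from the second. By the splitting-independence established above, both of these vectors equal $v_M^F$, whence $\text{sk}_\mathcal{A}(\phi)(v_M^F)=v_M^F$, completing the proof.
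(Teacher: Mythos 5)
Your proposal is correct and follows essentially the same route as the paper: reduce to the Reidemeister--Singer moves, use handlebody-group invariance of the ansular correlators for the isotopy and double-coset moves, and use the specialness normalization $\mu\delta=\text{id}_F$ to absorb the genus-one stabilization, exactly as in the paper's skein-theoretic computation. The only point where you are quicker than the paper is the claim that well-definedness for a fixed splitting is ``immediate'': the paper still has to transport the invariance of $\xi_H^F$ through the coend structure maps and the identification \eqref{eqnphiskeiniso}, which it does via the canonical isomorphisms $\Phi_{\mathcal{A}}(f.\bar H)f_*\cong\Phi_{\mathcal{A}}(\bar H)$ of \cite{BW} --- the same kind of compatibility check you correctly flag as the main obstacle in the stabilization step.
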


	\begin{proof}
		For the proof, we will use the Reidemeister-Singer Theorem, see e.g.~\cite{Craggs}. More precisely, we will show that the vector from Definition~\ref{defvmf} \begin{enumerate}[label={(\roman{*})}]
			\item descends to the mapping classes,		\label{hs3}
			\item descends to double cosets under the handlebody group,     \label{hs1}
			\item and remains stable after adding genus one handlebodies to the Heegaard splitting that are each other's complement in $\mathbb{S}^3$. \label{hs2}
		\end{enumerate}
		The proof will proceed by recalling / explaining what is meant by~\ref{hs3},~\ref{hs1} and~\ref{hs2}. Then we will show that $v_M^F$ is compatible with these operations.
		
		For~\ref{hs3} and~\ref{hs1}, we will use the following reformulation of Heegaard splittings, see e.g.~\cite[Section~5.8]{BK} and~\cite[Section~4]{henselprimer}: We can assume that $H'=f.{\bar H}$, where $\bar H$ is obtained from $H$ by orientation reversal, $f \in \text{Diff}(\bar \Sigma)$ and $f.\bar H$ is defined through the pushout
		\begin{equation}	
			\begin{tikzcd}
				\bar	\Sigma\times\{0\} \cong \bar \Sigma=\partial \bar H \ar[rr] \ar[dd,"f\times 0",swap] && \bar H \ar[dd, "\tilde{f}"] \\ \\ 
				\bar	\Sigma\times [0,1] \ar[rr, swap]  & & f.\bar H \ ,
			\end{tikzcd} \label{ftilde}
		\end{equation}
		see also \cite[Section~2.3]{BW}.
		
		The condition~\ref{hs3} means that, given two diffeomorphisms $f,f' \in \text{Diff}(\bar{\Sigma})$ that are isotopic, then \begin{align}  f.\bar{H} \cup_\Sigma H \cong f'.\bar{H} \cup_\Sigma H \ . \end{align}
		Hence, we need to show that these two Heegaard splittings give rise to the same element $v^F_M$ inside the skein module of $M$. Recall that, by \cite[Proposition 4.6]{BW}, one has a canonical isomorphism \begin{align} \Phi_{\mathcal{A}}(f.\bar{H})f_* \cong \Phi_{\mathcal{A}}(\bar{H}) \end{align}
		where $f_*:\int_{\bar{\Sigma}}\mathcal{A} \to \int_{\bar{\Sigma}}\mathcal{A}$ denotes the induced map on factorization homology. The evaluation of this isomorphism at $\mathcal{O}_{\bar{\Sigma}}$ is the map $\widehat{\mathcal{A}}(\tilde{f}):\widehat{\mathcal{A}}(\bar{H}) \cong \widehat{\mathcal{A}}(f.\bar{H})$ where $\tilde{f}$ is given by~\eqref{ftilde}, this follows from the homotopy fixed point structure of $\mathcal{O}_{\bar{\Sigma}}$. 
		An isotopy $\gamma:f \cong f'$ gives an isomorphism $\gamma_*:f_* \cong f'_*$ which leads to:
		\begin{align}
			\Phi_{\mathcal{A}}(f.\bar{H}) \cong \Phi_{\mathcal{A}}(\bar{H})f^{-1}_* \stackrel{\gamma_*}{\cong} \Phi_{\mathcal{A}}(\bar{H})f'^{-1}_* \cong \Phi_{\mathcal{A}}(f'.\bar{H}) \label{eqnisotopy}
		\end{align}
		This induces a map \begin{align}
			\widehat{\mathcal{A}}(f.\bar{H}) \cong \Phi_{\mathcal{A}}(f.\bar{H})\mathcal{O}_{\bar{\Sigma}}^{\mathcal{A}} \cong \Phi_{\mathcal{A}}(f'.\bar{H})\mathcal{O}_{\bar{\Sigma}}^{\mathcal{A}} \cong \widehat{\mathcal{A}}(f'.\bar{H}). \end{align}
		This map coincides with the composition $\widehat{\mathcal{A}}(\tilde{f'}) \circ \widehat{\mathcal{A}}(\tilde{f})^{-1}$. In particular, it maps $\xi_{f.\bar{H}}$ to $\xi_{f'.\bar{H}}$, as the ansular correlators are preserved by the maps between handlebodies. Then, the isomorphism $\Phi_{\mathcal{A}}(f.\bar{H})\cong \Phi_{\mathcal{A}}(f'.\bar{H})$ that is given by~\eqref{eqnisotopy}, which induces a map between coends, preserves the vectors that we chose, hence the condition~\ref{hs3} is verified.
		
		The condition~\ref{hs1} is equivalent to the following: For any $a, b \in \text{Map}(\bar{H}) \subset \text{Map}(\bar{\Sigma})$\begin{align}  afb.\bar{H} \cup_\Sigma H \cong f.\bar{H} \cup_\Sigma H \ . \end{align} Again, we will show that these Heegaard splittings point to the same element $v^F_M$ inside the skein module of $M$. First, observe that we have 
		\begin{equation*}
			\int^{X \in \int_{\Sigma}\mathcal{A}}\Phi_{\mathcal{A}}(afb.\bar{H})X^\vee \otimes \Phi_{\mathcal{A}}(H)X \cong \int^{X \in \int_{\Sigma}\mathcal{A}} \Phi_{\mathcal{A}}(fb.\bar{H})X^\vee \otimes \Phi_{\mathcal{A}}(a^{-1}.H)X
		\end{equation*}
		This is induced by the equivalence $a_*:\int_{\Sigma}\mathcal{A} \simeq \int_{\Sigma}\mathcal{A}$ and the canonical isomorphism $\Phi_{\mathcal{A}}(H) a_* \cong \Phi_{\mathcal{A}}(a^{-1}.H)$. This induces a map from $\widehat{\mathcal{A}}(afb.\bar{H})\otimes \widehat{\mathcal{A}}(H)$ to $\widehat{\mathcal{A}}(fb.\bar{H})\otimes \widehat{\mathcal{A}}(a^{-1}.H)$ that commutes with the inclusions into the coends. Then, we can conclude with the observation that, since $a$ and $b$ are elements of the handlebody group, we have canonical diffeomorphisms $a^{-1}.H \cong H$ and $b.\bar{H} \cong \bar{H}$ \cite[Remark 2.8]{BW}, and the isomorphism between $\widehat{\mathcal{A}}(afb.\bar{H})\otimes \widehat{\mathcal{A}}(H)$ and $\widehat{\mathcal{A}}(fb.\bar{H})\otimes \widehat{\mathcal{A}}(a^{-1}.H)$ is given by $\widehat{\mathcal{A}}(a^{-1}) \otimes \widehat{\mathcal{A}}(a)$ \cite[Corollary 4.7]{BW}. This map preserves the vectors given by the microcosm principle and this concludes the proof of~\ref{hs1}.

		For~\ref{hs2}, we will give a geometric proof that is based on the comparison between skein modules and ansular functors. More precisely, we will use the fact that $\widehat{\mathcal{A}}(H)$ can be seen as the skein module of $H$ by \cite[Theorem 3.4]{MW3}. Let us observe that the vector 
		$\xi_H^F\in\widehat{\mathcal{A}}(H)$
		inside the skein module for the handlebody $H$
		is given by a skein inside $H$ colored by $F$
		that is obtained by the following combination of the unit $\eta : I \to F$, the product $\mu : F \otimes F \to F$, the counit $\varepsilon : F \to I$ and the coproduct $\delta : F \to F \otimes F$, to be read from left to right:
		\begin{align}\begin{array}{c}	
				\begin{tikzpicture}[scale=0.5]
					\begin{pgfonlayer}{nodelayer}
						\node [style=none] (0) at (8, 1.75) {};
						\node [style=none] (1) at (8, -2.25) {};
						\node [style=none] (2) at (-9, 1.75) {};
						\node [style=none] (3) at (-9, -2.25) {};
						\node [style=none] (4) at (-7.5, -0.5) {};
						\node [style=none] (5) at (-6.5, -0.5) {};
						\node [style=none] (6) at (-8, 0) {};
						\node [style=none] (7) at (-6, 0) {};
						\node [style=none] (8) at (-2.5, -0.5) {};
						\node [style=none] (9) at (-1.5, -0.5) {};
						\node [style=none] (10) at (-3, 0) {};
						\node [style=none] (11) at (-1, 0) {};
						\node [style=none] (12) at (4.5, -0.5) {};
						\node [style=none] (13) at (5.5, -0.5) {};
						\node [style=none] (14) at (4, 0) {};
						\node [style=none] (15) at (6, 0) {};
						\node [style=smallblue] (16) at (-9, -0.25) {\color{white}$\delta$};
						\node [style=smallblue] (17) at (-5, -0.25) {\color{white}$\mu$};
						\node [style=smallblue] (18) at (-4, -0.25) {\color{white}$\delta$};
						\node [style=smallblue] (19) at (0, -0.25) {\color{white}$\mu$};
						\node [style=smallblue] (20) at (3, -0.25) {\color{white}$\delta$};
						\node [style=smallblue] (21) at (7, -0.25) {\color{white}$\mu$};
						\node [style=smallblue] (22) at (-10, -0.25) {\color{white}$\eta$};
						\node [style=smallblue] (23) at (8, -0.25) {\color{white}$\varepsilon$};
						\node [style=none] (24) at (1.5, -0.25) {\dots};
						\node [style=none] (25) at (1, -0.25) {};
						\node [style=none] (26) at (2, -0.25) {};
						\node [style=none] (27) at (-7, -1.75) {$H$};
						\node [style=none] (29) at (-8.5, 1) {$F$};
					\end{pgfonlayer}
					\begin{pgfonlayer}{edgelayer}
						\draw [bend left=45, looseness=1.25] (4.center) to (5.center);
						\draw [bend right=60, looseness=1.50] (6.center) to (7.center);
						\draw [bend left=45, looseness=1.25] (8.center) to (9.center);
						\draw [bend right=60, looseness=1.50] (10.center) to (11.center);
						\draw [bend left=45, looseness=1.25] (12.center) to (13.center);
						\draw [bend right=60, looseness=1.50] (14.center) to (15.center);
						\draw [bend right=90, looseness=1.75] (2.center) to (3.center);
						\draw [bend left=90, looseness=1.75] (0.center) to (1.center);
						\draw (2.center) to (0.center);
						\draw (3.center) to (1.center);
						\draw [style=blue, bend left=90, looseness=0.75] (16) to (17);
						\draw [style=blue] (17) to (18);
						\draw [style=blue, bend left=90, looseness=0.75] (18) to (19);
						\draw [style=blue, bend left=90, looseness=0.75] (20) to (21);
						\draw [style=blue, bend right=90, looseness=0.75] (16) to (17);
						\draw [style=blue, bend right=90, looseness=0.75] (18) to (19);
						\draw [style=blue, bend right=90, looseness=0.75] (20) to (21);
						\draw [style=blue] (16) to (22);
						\draw [style=blue] (21) to (23);
						\draw [style=blue] (19) to (25.center);
						\draw [style=blue] (26.center) to (20);
					\end{pgfonlayer}
				\end{tikzpicture}
				\label{eqnstabilization_elaborate}\end{array}
		\end{align} 
		This is a consequence of the construction in \cite[Definition 5.1]{Woi} together with the fact that excision for the skein modules is induced by gluing along boundary disks.
		Note that the translation of $\xi_H^F$ into a combination of $\eta, \mu, \delta$ and $\varepsilon$ depends on the reading direction, but the overall element $\xi_H^F$ does not --- this is a consequence of $F$ being a cyclic framed $E_2$-algebra inside a cyclic framed $E_2$-algebra.
		In other words, even though the picture in~\eqref{eqnstabilization_elaborate}
		suggest the choice of a system of cuts decomposing $H$ into genus zero handlebodies, $\xi_F^H$ does not depend on this choice.
		
		In the sequel, we will omit the labels for $\mu$ and $\delta$ (we will just a draw a trivalent vertex) and for $\eta$ and $\varepsilon$ as well (we will just draw a small dot).
		With these conventions, the skein in $M$ describing $v^F_M$ is given by:
		\begin{align} \begin{tikzpicture}[scale=0.5]
				\begin{pgfonlayer}{nodelayer}
					\node [style=none] (0) at (7.75, 2) {};
					\node [style=none] (1) at (7.75, -2) {};
					\node [style=none] (2) at (-9.25, 2) {};
					\node [style=none] (3) at (-9.25, -2) {};
					\node [style=none] (4) at (-7.75, -0.25) {};
					\node [style=none] (5) at (-6.75, -0.25) {};
					\node [style=none] (6) at (-8.25, 0.25) {};
					\node [style=none] (7) at (-6.25, 0.25) {};
					\node [style=none] (8) at (-2.75, -0.25) {};
					\node [style=none] (9) at (-1.75, -0.25) {};
					\node [style=none] (10) at (-3.25, 0.25) {};
					\node [style=none] (11) at (-1.25, 0.25) {};
					\node [style=none] (12) at (4.25, -0.25) {};
					\node [style=none] (13) at (5.25, -0.25) {};
					\node [style=none] (14) at (3.75, 0.25) {};
					\node [style=none] (15) at (5.75, 0.25) {};
					\node [style=none] (16) at (-9.25, 0) {};
					\node [style=none] (17) at (-5.25, 0) {};
					\node [style=none] (18) at (-4.25, 0) {};
					\node [style=none] (19) at (-0.25, 0) {};
					\node [style=none] (20) at (2.75, 0) {};
					\node [style=none] (21) at (6.75, 0) {};
					\node [style=smallblue] (22) at (-10.25, 0) {};
					\node [style=smallblue] (23) at (7.75, 0) {};
					\node [style=none] (24) at (1.25, 0) {\dots};
					\node [style=none] (25) at (0.75, 0) {};
					\node [style=none] (26) at (1.75, 0) {};
					\node [style=none] (27) at (-7.25, -1.5) {$H$};
					\node [style=none] (28) at (-7.25, -2.75) {$H'$};
					\node [style=none] (29) at (-8.75, 1.25) {$F$};
				\end{pgfonlayer}
				\begin{pgfonlayer}{edgelayer}
					\draw [bend left=45, looseness=1.25] (4.center) to (5.center);
					\draw [bend right=60, looseness=1.50] (6.center) to (7.center);
					\draw [bend left=45, looseness=1.25] (8.center) to (9.center);
					\draw [bend right=60, looseness=1.50] (10.center) to (11.center);
					\draw [bend left=45, looseness=1.25] (12.center) to (13.center);
					\draw [bend right=60, looseness=1.50] (14.center) to (15.center);
					\draw [bend right=90, looseness=1.75] (2.center) to (3.center);
					\draw [bend left=90, looseness=1.75] (0.center) to (1.center);
					\draw (2.center) to (0.center);
					\draw (3.center) to (1.center);
					\draw [style=blue, bend left=90, looseness=0.75] (16.center) to (17.center);
					\draw [style=blue] (17.center) to (18.center);
					\draw [style=blue, bend left=90, looseness=0.75] (18.center) to (19.center);
					\draw [style=blue, bend left=90, looseness=0.75] (20.center) to (21.center);
					\draw [style=blue, bend right=90, looseness=0.75] (16.center) to (17.center);
					\draw [style=blue, bend right=90, looseness=0.75] (18.center) to (19.center);
					\draw [style=blue, bend right=90, looseness=0.75] (20.center) to (21.center);
					\draw [style=blue] (16.center) to (22);
					\draw [style=blue] (21.center) to (23);
					\draw [style=blue] (19.center) to (25.center);
					\draw [style=blue] (26.center) to (20.center);
				\end{pgfonlayer}
			\end{tikzpicture}
		\end{align}
		In this picture $\xi_F^H$ inside $H$ is complemented by the empty skein
		in $H'$.
		
		For the genus one stabilization in~\ref{hs2}, we need to change the Heegaard splitting by adding to $H$ a genus one handlebody $h$, and to $H'$ the complement $h^\complement$ of $h$ in $\mathbb{S}^3$.
		(This amounts to taking a connected sum of $M$ with $\mathbb{S}^3$, which is the reason why the overall manifold does not change.)
		This `new' Heegaard splitting can be described as follows:
		Let us consider a three-dimensional ball $B$ that intersects both $H$ and $H'$. The surface $\Sigma \cap B$ gives a decomposition of $B$ by two genus zero handlebodies that is depicted on the right below: 
		\begin{align}	\begin{tikzpicture}[scale=0.47]
				\begin{pgfonlayer}{nodelayer}
					\node [style=none] (0) at (13.25, 0.25) {};
					\node [style=none] (1) at (8.25, 0) {};
					\node [style=none] (2) at (9.75, 1.25) {};
					\node [style=none] (3) at (9.75, 0) {};
					\node [style=none] (4) at (1.75, 2) {};
					\node [style=none] (5) at (1.75, -2) {};
					\node [style=none] (6) at (-15.25, 2) {};
					\node [style=none] (7) at (-15.25, -2) {};
					\node [style=none] (8) at (-13.75, -0.25) {};
					\node [style=none] (9) at (-12.75, -0.25) {};
					\node [style=none] (10) at (-14.25, 0.25) {};
					\node [style=none] (11) at (-12.25, 0.25) {};
					\node [style=none] (12) at (-8.75, -0.25) {};
					\node [style=none] (13) at (-7.75, -0.25) {};
					\node [style=none] (14) at (-9.25, 0.25) {};
					\node [style=none] (15) at (-7.25, 0.25) {};
					\node [style=none] (16) at (-1.75, -0.25) {};
					\node [style=none] (17) at (-0.75, -0.25) {};
					\node [style=none] (18) at (-2.25, 0.25) {};
					\node [style=none] (19) at (-0.25, 0.25) {};
					\node [style=none] (20) at (-15.25, 0) {};
					\node [style=none] (21) at (-11.25, 0) {};
					\node [style=none] (22) at (-10.25, 0) {};
					\node [style=none] (23) at (-6.25, 0) {};
					\node [style=none] (24) at (-3.25, 0) {};
					\node [style=none] (25) at (0.75, 0) {};
					\node [style=smallblue] (26) at (-16.25, 0) {};
					\node [style=smallblue] (27) at (1.75, 0) {};
					\node [style=none] (28) at (-4.75, 0) {\dots};
					\node [style=none] (29) at (-5.25, 0) {};
					\node [style=none] (30) at (-4.25, 0) {};
					\node [style=none] (31) at (-13.25, -1.5) {$H$};
					\node [style=none] (32) at (-13.25, -2.75) {$H'$};
					\node [style=none] (33) at (-14.75, 1.25) {$F$};
					\node [style=none] (34) at (2.5, 0) {};
					\node [style=none] (35) at (5, 0) {};
					\node [style=none] (36) at (4.75, 1.5) {$B$};
				\end{pgfonlayer}
				\begin{pgfonlayer}{edgelayer}
					\draw [style=dashed, bend left=90, looseness=1.75] (0.center) to (1.center);
					\draw [style=dashed, bend right=90, looseness=1.75] (0.center) to (1.center);
					\draw [style=dashed, bend right=45] (1.center) to (0.center);
					\draw [style=dashed, bend right=90, looseness=2.00] (2.center) to (3.center);
					\draw [style=dashed, bend left=90, looseness=1.75] (2.center) to (3.center);
					\draw [style=dashed, bend left=105, looseness=5.75] (2.center) to (3.center);
					\draw [bend left=45, looseness=1.25] (8.center) to (9.center);
					\draw [bend right=60, looseness=1.50] (10.center) to (11.center);
					\draw [bend left=45, looseness=1.25] (12.center) to (13.center);
					\draw [bend right=60, looseness=1.50] (14.center) to (15.center);
					\draw [bend left=45, looseness=1.25] (16.center) to (17.center);
					\draw [bend right=60, looseness=1.50] (18.center) to (19.center);
					\draw [bend right=90, looseness=1.75] (6.center) to (7.center);
					\draw [bend left=90, looseness=1.75] (4.center) to (5.center);
					\draw (6.center) to (4.center);
					\draw (7.center) to (5.center);
					\draw [style=blue, bend left=90, looseness=0.75] (20.center) to (21.center);
					\draw [style=blue] (21.center) to (22.center);
					\draw [style=blue, bend left=90, looseness=0.75] (22.center) to (23.center);
					\draw [style=blue, bend left=90, looseness=0.75] (24.center) to (25.center);
					\draw [style=blue, bend right=90, looseness=0.75] (20.center) to (21.center);
					\draw [style=blue, bend right=90, looseness=0.75] (22.center) to (23.center);
					\draw [style=blue, bend right=90, looseness=0.75] (24.center) to (25.center);
					\draw [style=blue] (20.center) to (26);
					\draw [style=blue] (25.center) to (27);
					\draw [style=blue] (23.center) to (29.center);
					\draw [style=blue] (30.center) to (24.center);
					\draw [style=dashed, bend left=90, looseness=1.75] (34.center) to (35.center);
					\draw [style=dashed, bend right=90, looseness=1.50] (34.center) to (35.center);
					\draw [style=dashed, bend right=45] (34.center) to (35.center);
				\end{pgfonlayer}
			\end{tikzpicture}	
		\end{align}
		If one considers the decomposition of $B$ by two genus one handlebodies (such a decomposition is described on the right below), one gets the Heegaard splitting of $M$ by $H' \# h^\complement$ and $H \# h$:
		\begin{align}	\begin{tikzpicture}[scale=0.47]
				\begin{pgfonlayer}{nodelayer}
					\node [style=none] (0) at (13.75, 0) {};
					\node [style=none] (1) at (8.75, 0) {};
					\node [style=none] (2) at (10, 1.25) {};
					\node [style=none] (3) at (10, 0) {};
					\node [style=none] (4) at (11, 0.75) {};
					\node [style=none] (5) at (12, 0.75) {};
					\node [style=none] (6) at (12.25, 1) {};
					\node [style=none] (7) at (10.75, 1) {};
					\node [style=none] (8) at (11.75, -1.25) {};
					\node [style=none] (9) at (2, 2) {};
					\node [style=none] (10) at (2, -2) {};
					\node [style=none] (11) at (-15, 2) {};
					\node [style=none] (12) at (-15, -2) {};
					\node [style=none] (13) at (-13.5, -0.25) {};
					\node [style=none] (14) at (-12.5, -0.25) {};
					\node [style=none] (15) at (-14, 0.25) {};
					\node [style=none] (16) at (-12, 0.25) {};
					\node [style=none] (17) at (-8.5, -0.25) {};
					\node [style=none] (18) at (-7.5, -0.25) {};
					\node [style=none] (19) at (-9, 0.25) {};
					\node [style=none] (20) at (-7, 0.25) {};
					\node [style=none] (21) at (-1.5, -0.25) {};
					\node [style=none] (22) at (-0.5, -0.25) {};
					\node [style=none] (23) at (-2, 0.25) {};
					\node [style=none] (24) at (0, 0.25) {};
					\node [style=none] (25) at (-15, 0) {};
					\node [style=none] (26) at (-11, 0) {};
					\node [style=none] (27) at (-10, 0) {};
					\node [style=none] (28) at (-6, 0) {};
					\node [style=none] (29) at (-3, 0) {};
					\node [style=none] (30) at (1, 0) {};
					\node [style=smallblue] (31) at (-16, 0) {};
					\node [style=smallblue] (32) at (2, 0) {};
					\node [style=none] (33) at (-4.5, 0) {\dots};
					\node [style=none] (34) at (-5, 0) {};
					\node [style=none] (35) at (-4, 0) {};
					\node [style=none] (36) at (-13, -1.5) {$H \# h$};
					\node [style=none] (37) at (-13, -2.75) {$H' \# h^\complement$};
					\node [style=none] (38) at (-14.5, 1.25) {$F$};
					\node [style=none] (39) at (2.75, 0) {};
					\node [style=none] (40) at (7.25, 0) {};
					\node [style=none] (41) at (3.25, 1.5) {};
					\node [style=none] (42) at (3.5, -1.5) {};
					\node [style=none] (43) at (3.75, -0.25) {};
					\node [style=none] (44) at (4.75, -0.25) {};
					\node [style=none] (45) at (3.25, 0.25) {};
					\node [style=none] (46) at (5.25, 0.25) {};
					\node [style=none] (47) at (7, 2.25) {$B$};
				\end{pgfonlayer}
				\begin{pgfonlayer}{edgelayer}
					\draw [style=dashed, bend left=90, looseness=1.75] (0.center) to (1.center);
					\draw [style=dashed, bend right=90, looseness=1.75] (0.center) to (1.center);
					\draw [style=dashed, bend right=45] (1.center) to (0.center);
					\draw [style=dashed, bend right=90, looseness=2.00] (2.center) to (3.center);
					\draw [style=dashed, bend left=90, looseness=1.75] (2.center) to (3.center);
					\draw [style=dashed, bend left=105, looseness=8.25] (2.center) to (3.center);
					\draw [style=dashed, bend left] (4.center) to (5.center);
					\draw [style=dashed, bend right=75, looseness=1.25] (7.center) to (6.center);
					\draw [bend left=45, looseness=1.25] (13.center) to (14.center);
					\draw [bend right=60, looseness=1.50] (15.center) to (16.center);
					\draw [bend left=45, looseness=1.25] (17.center) to (18.center);
					\draw [bend right=60, looseness=1.50] (19.center) to (20.center);
					\draw [bend left=45, looseness=1.25] (21.center) to (22.center);
					\draw [bend right=60, looseness=1.50] (23.center) to (24.center);
					\draw [bend right=90, looseness=1.75] (11.center) to (12.center);
					\draw (11.center) to (9.center);
					\draw (12.center) to (10.center);
					\draw [style=blue, bend left=90, looseness=0.75] (25.center) to (26.center);
					\draw [style=blue] (26.center) to (27.center);
					\draw [style=blue, bend left=90, looseness=0.75] (27.center) to (28.center);
					\draw [style=blue, bend left=90, looseness=0.75] (29.center) to (30.center);
					\draw [style=blue, bend right=90, looseness=0.75] (25.center) to (26.center);
					\draw [style=blue, bend right=90, looseness=0.75] (27.center) to (28.center);
					\draw [style=blue, bend right=90, looseness=0.75] (29.center) to (30.center);
					\draw [style=blue] (25.center) to (31);
					\draw [style=blue] (30.center) to (32);
					\draw [style=blue] (28.center) to (34.center);
					\draw [style=blue] (35.center) to (29.center);
					\draw [style=dashed, bend left=90, looseness=1.75] (39.center) to (40.center);
					\draw [style=dashed, bend right=90, looseness=1.50] (39.center) to (40.center);
					\draw [style=dashed, bend right=45] (39.center) to (40.center);
					\draw [bend right=15] (10.center) to (42.center);
					\draw [bend left, looseness=0.50] (9.center) to (41.center);
					\draw [in=0, out=0, looseness=3.00] (41.center) to (42.center);
					\draw [bend left=45, looseness=1.25] (43.center) to (44.center);
					\draw [bend right=60, looseness=1.50] (45.center) to (46.center);
				\end{pgfonlayer}
			\end{tikzpicture}			
		\end{align}
Now we will use the fact that $F$ is special, in particular we have $\mu \delta = \text{id}_F$. One can pull $v_M^F$ inside $B$, this obviously does not change the skein in $M$ because of the isotopy invariance of skeins:
\begin{align}	\begin{tikzpicture}[scale=0.5]
		\begin{pgfonlayer}{nodelayer}
			\node [style=none] (0) at (7, 2) {};
			\node [style=none] (1) at (7, -2) {};
			\node [style=none] (2) at (-10, 2) {};
			\node [style=none] (3) at (-10, -2) {};
			\node [style=none] (4) at (-8.5, -0.25) {};
			\node [style=none] (5) at (-7.5, -0.25) {};
			\node [style=none] (6) at (-9, 0.25) {};
			\node [style=none] (7) at (-7, 0.25) {};
			\node [style=none] (8) at (-3.5, -0.25) {};
			\node [style=none] (9) at (-2.5, -0.25) {};
			\node [style=none] (10) at (-4, 0.25) {};
			\node [style=none] (11) at (-2, 0.25) {};
			\node [style=none] (12) at (3.5, -0.25) {};
			\node [style=none] (13) at (4.5, -0.25) {};
			\node [style=none] (14) at (3, 0.25) {};
			\node [style=none] (15) at (5, 0.25) {};
			\node [style=none] (16) at (-10, 0) {};
			\node [style=none] (17) at (-6, 0) {};
			\node [style=none] (18) at (-5, 0) {};
			\node [style=none] (19) at (-1, 0) {};
			\node [style=none] (20) at (2, 0) {};
			\node [style=none] (21) at (6, 0) {};
			\node [style=smallblue] (22) at (-11, 0) {};
			\node [style=smallblue] (23) at (11, 0) {};
			\node [style=none] (24) at (0.5, 0) {\dots};
			\node [style=none] (25) at (0, 0) {};
			\node [style=none] (26) at (1, 0) {};
			\node [style=none] (27) at (-8, -1.5) {$H$};
			\node [style=none] (28) at (-8, -2.75) {$H'$};
			\node [style=none] (29) at (-9.5, 1.25) {$F$};
			\node [style=none] (30) at (7.75, 0) {};
			\node [style=none] (31) at (12.25, 0) {};
			\node [style=none] (32) at (8.25, 1.5) {};
			\node [style=none] (33) at (8.5, -1.5) {};
			\node [style=none] (38) at (8.25, 0) {};
			\node [style=none] (39) at (10.5, 0) {};
			\node [style=none] (40) at (11.75, 2.5) {};
			\node [style=none] (41) at (11.75, 2.5) {};
			\node [style=none] (42) at (12, 2.25) {$B$};
		\end{pgfonlayer}
		\begin{pgfonlayer}{edgelayer}
			\draw [bend left=45, looseness=1.25] (4.center) to (5.center);
			\draw [bend right=60, looseness=1.50] (6.center) to (7.center);
			\draw [bend left=45, looseness=1.25] (8.center) to (9.center);
			\draw [bend right=60, looseness=1.50] (10.center) to (11.center);
			\draw [bend left=45, looseness=1.25] (12.center) to (13.center);
			\draw [bend right=60, looseness=1.50] (14.center) to (15.center);
			\draw [bend right=90, looseness=1.75] (2.center) to (3.center);
			\draw (2.center) to (0.center);
			\draw (3.center) to (1.center);
			\draw [style=blue, bend left=90, looseness=0.75] (16.center) to (17.center);
			\draw [style=blue] (17.center) to (18.center);
			\draw [style=blue, bend left=90, looseness=0.75] (18.center) to (19.center);
			\draw [style=blue, bend left=90, looseness=0.75] (20.center) to (21.center);
			\draw [style=blue, bend right=90, looseness=0.75] (16.center) to (17.center);
			\draw [style=blue, bend right=90, looseness=0.75] (18.center) to (19.center);
			\draw [style=blue, bend right=90, looseness=0.75] (20.center) to (21.center);
			\draw [style=blue] (16.center) to (22);
			\draw [style=blue] (19.center) to (25.center);
			\draw [style=blue] (26.center) to (20.center);
			\draw [style=dashed, bend left=90, looseness=1.75] (30.center) to (31.center);
			\draw [style=dashed, bend right=90, looseness=1.50] (30.center) to (31.center);
			\draw [style=dashed, bend right=45] (30.center) to (31.center);
			\draw [bend right=15] (1.center) to (33.center);
			\draw [bend left, looseness=0.50] (0.center) to (32.center);
			\draw [in=0, out=0, looseness=3.50] (32.center) to (33.center);
			\draw [style=blue] (21.center) to (38.center);
			\draw [style=blue] (38.center) to (39.center);
			\draw [style=blue] (39.center) to (23);
		\end{pgfonlayer}
	\end{tikzpicture}
\end{align}
As $F$ is special, this vector is the same as the following one:
\begin{align}	\begin{tikzpicture}[scale=0.5]
		\begin{pgfonlayer}{nodelayer}
			\node [style=none] (0) at (7, 2) {};
			\node [style=none] (1) at (7, -2) {};
			\node [style=none] (2) at (-10, 2) {};
			\node [style=none] (3) at (-10, -2) {};
			\node [style=none] (4) at (-8.5, -0.25) {};
			\node [style=none] (5) at (-7.5, -0.25) {};
			\node [style=none] (6) at (-9, 0.25) {};
			\node [style=none] (7) at (-7, 0.25) {};
			\node [style=none] (8) at (-3.5, -0.25) {};
			\node [style=none] (9) at (-2.5, -0.25) {};
			\node [style=none] (10) at (-4, 0.25) {};
			\node [style=none] (11) at (-2, 0.25) {};
			\node [style=none] (12) at (3.5, -0.25) {};
			\node [style=none] (13) at (4.5, -0.25) {};
			\node [style=none] (14) at (3, 0.25) {};
			\node [style=none] (15) at (5, 0.25) {};
			\node [style=none] (16) at (-10, 0) {};
			\node [style=none] (17) at (-6, 0) {};
			\node [style=none] (18) at (-5, 0) {};
			\node [style=none] (19) at (-1, 0) {};
			\node [style=none] (20) at (2, 0) {};
			\node [style=none] (21) at (6, 0) {};
			\node [style=smallblue] (22) at (-11, 0) {};
			\node [style=smallblue] (23) at (11, 0) {};
			\node [style=none] (24) at (0.5, 0) {\dots};
			\node [style=none] (25) at (0, 0) {};
			\node [style=none] (26) at (1, 0) {};
			\node [style=none] (27) at (-8, -1.5) {$H$};
			\node [style=none] (28) at (-8, -2.75) {$H'$};
			\node [style=none] (29) at (-9.5, 1.25) {$F$};
			\node [style=none] (30) at (7.75, 0) {};
			\node [style=none] (31) at (12.25, 0) {};
			\node [style=none] (32) at (8.25, 1.5) {};
			\node [style=none] (33) at (8.5, -1.5) {};
			\node [style=none] (38) at (8.25, 0) {};
			\node [style=none] (39) at (10.5, 0) {};
			\node [style=none] (40) at (11.75, 2.25) {$B$};
		\end{pgfonlayer}
		\begin{pgfonlayer}{edgelayer}
			\draw [bend left=45, looseness=1.25] (4.center) to (5.center);
			\draw [bend right=60, looseness=1.50] (6.center) to (7.center);
			\draw [bend left=45, looseness=1.25] (8.center) to (9.center);
			\draw [bend right=60, looseness=1.50] (10.center) to (11.center);
			\draw [bend left=45, looseness=1.25] (12.center) to (13.center);
			\draw [bend right=60, looseness=1.50] (14.center) to (15.center);
			\draw [bend right=90, looseness=1.75] (2.center) to (3.center);
			\draw (2.center) to (0.center);
			\draw (3.center) to (1.center);
			\draw [style=blue, bend left=90, looseness=0.75] (16.center) to (17.center);
			\draw [style=blue] (17.center) to (18.center);
			\draw [style=blue, bend left=90, looseness=0.75] (18.center) to (19.center);
			\draw [style=blue, bend left=90, looseness=0.75] (20.center) to (21.center);
			\draw [style=blue, bend right=90, looseness=0.75] (16.center) to (17.center);
			\draw [style=blue, bend right=90, looseness=0.75] (18.center) to (19.center);
			\draw [style=blue, bend right=90, looseness=0.75] (20.center) to (21.center);
			\draw [style=blue] (16.center) to (22);
			\draw [style=blue] (19.center) to (25.center);
			\draw [style=blue] (26.center) to (20.center);
			\draw [style=dashed, bend left=90, looseness=1.75] (30.center) to (31.center);
			\draw [style=dashed, bend right=90, looseness=1.50] (30.center) to (31.center);
			\draw [style=dashed, bend right=45] (30.center) to (31.center);
			\draw [bend right=15] (1.center) to (33.center);
			\draw [bend left, looseness=0.50] (0.center) to (32.center);
			\draw [in=0, out=0, looseness=3.50] (32.center) to (33.center);
			\draw [style=blue] (21.center) to (38.center);
			\draw [style=blue, bend left=90] (38.center) to (39.center);
			\draw [style=blue] (39.center) to (23);
			\draw [style=blue, bend right=75] (38.center) to (39.center);
		\end{pgfonlayer}
	\end{tikzpicture}	
\end{align}
Now, by the genus one decomposition of $B$, this skein in $M$ can be seen as 
\begin{align}	\begin{tikzpicture}[scale=0.5]
		\begin{pgfonlayer}{nodelayer}
			\node [style=none] (0) at (7, 2) {};
			\node [style=none] (1) at (7, -2) {};
			\node [style=none] (2) at (-10, 2) {};
			\node [style=none] (3) at (-10, -2) {};
			\node [style=none] (4) at (-8.5, -0.25) {};
			\node [style=none] (5) at (-7.5, -0.25) {};
			\node [style=none] (6) at (-9, 0.25) {};
			\node [style=none] (7) at (-7, 0.25) {};
			\node [style=none] (8) at (-3.5, -0.25) {};
			\node [style=none] (9) at (-2.5, -0.25) {};
			\node [style=none] (10) at (-4, 0.25) {};
			\node [style=none] (11) at (-2, 0.25) {};
			\node [style=none] (12) at (3.5, -0.25) {};
			\node [style=none] (13) at (4.5, -0.25) {};
			\node [style=none] (14) at (3, 0.25) {};
			\node [style=none] (15) at (5, 0.25) {};
			\node [style=none] (16) at (-10, 0) {};
			\node [style=none] (17) at (-6, 0) {};
			\node [style=none] (18) at (-5, 0) {};
			\node [style=none] (19) at (-1, 0) {};
			\node [style=none] (20) at (2, 0) {};
			\node [style=none] (21) at (6, 0) {};
			\node [style=smallblue] (22) at (-11, 0) {};
			\node [style=smallblue] (23) at (11, 0) {};
			\node [style=none] (24) at (0.5, 0) {\dots};
			\node [style=none] (25) at (0, 0) {};
			\node [style=none] (26) at (1, 0) {};
			\node [style=none] (27) at (-8, -1.5) {$H \# h$};
			\node [style=none] (28) at (-8, -2.75) {$H' \# h^\complement$};
			\node [style=none] (29) at (-9.5, 1.25) {$F$};
			\node [style=none] (30) at (7.75, 0) {};
			\node [style=none] (31) at (12.25, 0) {};
			\node [style=none] (32) at (8.25, 1.5) {};
			\node [style=none] (33) at (8.5, -1.5) {};
			\node [style=none] (34) at (8.75, -0.25) {};
			\node [style=none] (35) at (9.75, -0.25) {};
			\node [style=none] (36) at (8.25, 0.25) {};
			\node [style=none] (37) at (10.25, 0.25) {};
			\node [style=none] (38) at (8, 0) {};
			\node [style=none] (39) at (10.5, 0) {};
			\node [style=none] (40) at (12, 2.25) {$B$};
		\end{pgfonlayer}
		\begin{pgfonlayer}{edgelayer}
			\draw [bend left=45, looseness=1.25] (4.center) to (5.center);
			\draw [bend right=60, looseness=1.50] (6.center) to (7.center);
			\draw [bend left=45, looseness=1.25] (8.center) to (9.center);
			\draw [bend right=60, looseness=1.50] (10.center) to (11.center);
			\draw [bend left=45, looseness=1.25] (12.center) to (13.center);
			\draw [bend right=60, looseness=1.50] (14.center) to (15.center);
			\draw [bend right=90, looseness=1.75] (2.center) to (3.center);
			\draw (2.center) to (0.center);
			\draw (3.center) to (1.center);
			\draw [style=blue, bend left=90, looseness=0.75] (16.center) to (17.center);
			\draw [style=blue] (17.center) to (18.center);
			\draw [style=blue, bend left=90, looseness=0.75] (18.center) to (19.center);
			\draw [style=blue, bend left=90, looseness=0.75] (20.center) to (21.center);
			\draw [style=blue, bend right=90, looseness=0.75] (16.center) to (17.center);
			\draw [style=blue, bend right=90, looseness=0.75] (18.center) to (19.center);
			\draw [style=blue, bend right=90, looseness=0.75] (20.center) to (21.center);
			\draw [style=blue] (16.center) to (22);
			\draw [style=blue] (19.center) to (25.center);
			\draw [style=blue] (26.center) to (20.center);
			\draw [style=dashed, bend left=90, looseness=1.75] (30.center) to (31.center);
			\draw [style=dashed, bend right=90, looseness=1.50] (30.center) to (31.center);
			\draw [style=dashed, bend right=45] (30.center) to (31.center);
			\draw [bend right=15] (1.center) to (33.center);
			\draw [bend left, looseness=0.50] (0.center) to (32.center);
			\draw [in=0, out=0, looseness=3.50] (32.center) to (33.center);
			\draw [bend left=45, looseness=1.25] (34.center) to (35.center);
			\draw [bend right=60, looseness=1.50] (36.center) to (37.center);
			\draw [style=blue] (21.center) to (38.center);
			\draw [style=blue, bend left=75] (38.center) to (39.center);
			\draw [style=blue, bend right=75, looseness=1.25] (38.center) to (39.center);
			\draw [style=blue] (39.center) to (23);
		\end{pgfonlayer}
	\end{tikzpicture}
\end{align}
This is, by definition, the vector that we assign to $H \# h$, see~\eqref{eqnstabilization_elaborate}. This proves~\refeq{hs2} and we conclude that $v_M^F$ does not depend on the choice of the Heegaard splitting. 

The diffeomorphism invariance of $v_M^F$ is already implicit in the independence of the Heegaard splitting. Let us spell this out: Let $\psi:M \cong M$ be a diffeomorphism. Choose a Heegaard splitting $H' \cup_{\Sigma} H \cong M$. Then, $\psi$ gives another Heegaard splitting of $M$, however this new Heegaard splitting is equivalent to the one that we chose, by the previous three moves~\ref{hs3},~\ref{hs1} and~\ref{hs2} \cite[Section~5.8]{BK}. We already showed that the vector $v_M^F \in \text{sk}_{\mathcal{A}}(M)$ is preserved by these three moves. Therefore, $v_M^F$ is invariant with respect to the diffeomorphisms of $M$.
\end{proof}

\section{Connection to Reshetikhin-Turaev and Crane-Yetter topological field theories}

The Reshetikhin-Turaev construction of a three-dimensional topological field theory \cite{RT1,RT2,Tur} takes as an algebraic input a modular fusion category $\mathcal{A}$, i.e.\
a ribbon fusion category with non-degenerate braiding. Non-degeneracy of the braiding means that every object that trivially double braids with all other objects is isomorphic to a direct sum of the monoidal unit. This topological field theory has a so-called \emph{framing anomaly}, meaning that it is not a symmetric monoidal functor in the usual sense but with a composition that is well-defined up to a scalar. The Reshetikhin-Turaev topological field theory can be seen as a boundary condition in the sense of \cite{Walker,FT,Ben} to the four-dimensional Crane-Yetter topological field theory $\mathcal{Z}^{\text{CY}}$ \cite{CY,CKY,CGHP}. The latter takes a ribbon fusion category $\mathcal{A}$ as input. The value of the Crane-Yetter topological field theory is given by skein-theoretical constructions. In particular, the vector space that is associated to a closed three-dimensional manifold $M$ is the skein module $\text{sk}_{\mathcal{A}}(M)$. The vector $v_M^F \in \text{sk}_{\mathcal{A}}(M)$ of Theorem~\ref{thmain2}, being a diffeomorphism invariant of $M$, can be seen as a boundary condition for $\mathcal{Z}^{\text{CY}}$ that, here, is just defined in dimension three, that is not necessarily given by the empty skein if $F \neq I$.

If $\mathcal{A}$ is moreover modular, the Reshetikhin-Turaev topological field theory can be recovered as an anomalous theory associated to $\mathcal{Z}^{\text{CY}}$ with the boundary condition given by the empty skein, see \cite[Section 6]{Ben} for the precise statement. In particular, the Reshetikhin-Turaev invariant of a closed three-manifold $M$ can be seen as a map \begin{align}
k \rightarrow \text{sk}_{\mathcal{A}}(M) \xrightarrow{\mathcal{Z}_{\mathcal{A}}^{\text{CY}}(W)} k \ ,   \label{eqnnumber}
\end{align}
where the first map points to the empty skein in $M$ and $W$ is a four-dimensional manifold with $\partial W = M$. The vector $v_M^F$ in Theorem~\ref{thmain2} for $F=I$ is the empty skein. Therefore, we obtain the following consequence:
\begin{corollary}\label{son}
Let $W$ be a four-dimensional manifold with $\partial W = M$. Then, the number $\mathcal{Z}^{\text{CY}}(W)(v_M^I)$ is the Reshetikhin-Turaev invariant of $M$.
\end{corollary}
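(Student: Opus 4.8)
The plan is to deduce the corollary from the two ingredients already assembled in the paragraph preceding it, so that the only real work is an identification of vectors. First I would recall that, by the boundary-condition description of the Reshetikhin-Turaev theory from \cite[Section 6]{Ben} recorded in~\eqref{eqnnumber}, the Reshetikhin-Turaev invariant of $M$ is by definition the composite $k \to \text{sk}_\mathcal{A}(M) \xrightarrow{\mathcal{Z}^{\text{CY}}(W)} k$ whose first arrow selects the \emph{empty} skein in $M$ and whose second arrow is the value of the Crane-Yetter theory on the chosen bounding $W$. Consequently, the whole corollary reduces to the assertion that the distinguished vector $v_M^I$ produced by Theorem~\ref{thmain2} for $F = I$ is precisely this empty skein; once this is established, evaluating $\mathcal{Z}^{\text{CY}}(W)$ at the common vector yields the claim immediately.

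To identify $v_M^I$ with the empty skein, I would unwind Definition~\ref{defvmf} in tandem with the skein-theoretic picture of the ansular correlator in~\eqref{eqnstabilization_elaborate}, using the identification $\widehat{\mathcal{A}}(H) \cong \text{sk}_\mathcal{A}(H)$ of \cite[Theorem 3.4]{MW3}. For $F = I$ the Frobenius structure is the trivial one: the unit $\eta$, counit $\varepsilon$, product $\mu$ and coproduct $\delta$ are all identities up to the coherence isomorphisms of $\mathcal{A}$. Hence the trivalent network colored by $F$ that represents $\xi_H^I$ degenerates, and since an $I$-colored strand is removable under the skein relations, both $\xi_H^I$ and $\xi_{H'}^I$ are the empty skein in their respective handlebodies. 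Feeding these into the gluing map $\iota_{H',H}$ of Definition~\ref{defvmf}, which is built from the coend and excision isomorphisms~\eqref{factandans} and~\eqref{eqnphiskeiniso}, then sends the pair of empty skeins to the empty skein in $M$, establishing $v_M^I = \emptyset$ inside $\text{sk}_\mathcal{A}(M)$.

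Combining the two steps concludes the argument: $\mathcal{Z}^{\text{CY}}(W)(v_M^I)$ equals $\mathcal{Z}^{\text{CY}}(W)$ evaluated at the empty skein, which by~\eqref{eqnnumber} is exactly the Reshetikhin-Turaev invariant of $M$ associated to $W$ (the dependence on $W$ being the framing anomaly already noted in the text).

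The only substantive point is the middle step, and I expect the main obstacle to be verifying that the microcosm construction of $\xi_H^I$ gives the empty skein \emph{on the nose}, rather than a nonzero scalar multiple of it. This is where the specialness normalization imposed before Theorem~\ref{thmain2} enters: with $\mu\delta = \text{id}_F$ and, for $F = I$, the composite $\varepsilon\eta = \text{id}_I$, contracting the network in~\eqref{eqnstabilization_elaborate} introduces no stray scalar, so the correlator maps to the empty skein with unit coefficient. Making this precise amounts to checking that the equivalence $\widehat{\mathcal{A}}(H) \cong \text{sk}_\mathcal{A}(H)$ carries the unit ansular correlator to the empty skein, after which the conclusion is purely formal.
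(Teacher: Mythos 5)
Your proposal is correct and follows essentially the same route as the paper, which likewise reduces the corollary to the identification of $v_M^I$ with the empty skein and then invokes the description~\eqref{eqnnumber} of the Reshetikhin-Turaev invariant as the Crane-Yetter evaluation at the empty skein. The paper simply asserts that $v_M^I$ is the empty skein without elaboration, so your unwinding of Definition~\ref{defvmf} via~\eqref{eqnstabilization_elaborate} and the removability of $I$-colored strands is a harmless (and welcome) expansion of the same argument.
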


The production of a number using~\eqref{eqnnumber} works of course also for a special symmetric commutative Frobenius algebra $F\neq I$. It then yields a manifold invariant
$\mathcal{Z}^{\text{CY}}(W)(v_M^F)$. To see examples of special symmetric commutative Frobenius algebras $F \neq I$, we refer to \cite{kirillovostrik,Dav}.

\medskip
\small
\newcommand{\etalchar}[1]{$^{#1}$}

\noindent \textsc{Université Bourgogne Europe, CNRS, IMB UMR 5584, F-21000 Dijon, France}

\end{document}